\documentclass[11pt]{amsart}

\usepackage[margin=1in]{geometry}
\usepackage{amsmath,amssymb,amsthm,mathrsfs}

\theoremstyle{plain}
\newtheorem{theorem}{Theorem}
\newtheorem{proposition}[theorem]{Proposition}

\theoremstyle{remark}

\newcommand{\bZ}{\mathbb{Z}}
\newcommand{\bR}{\mathbb{R}}
\newcommand{\bQ}{\mathbb{Q}}

\begin{document}

\begin{quote}
\begin{center}
\emph{The second of a pair of papers in memory of Professor Michio Ozeki.}
\end{center}
\end{quote}

\title[Second-shell generation for extremal Type~II lattices of rank~$72$]
{Extremal Type~II lattices of rank~$72$ are generated\\
by their second shell}

\author[Scott Duke Kominers]{Scott Duke Kominers}
\address{Harvard Business School; Department of Economics and Center of
Mathematical Sciences and Applications, Harvard University; and a16z crypto}
\email{kominers@fas.harvard.edu}
\thanks{I used LLMs to assist with computations in the preparation of this
article, particularly GPT-5.4 Pro, Gemini 3.1 Pro, and Claude 4.6 Opus
(all accessed via Poe with the support of Quora, where I am an advisor). I
especially appreciate helpful comments from Manabu Oura and a thorough review
from Refine.ink. The problem, methods, and eventual written form are my own;
and of course any errors remain my responsibility. This work was conducted
while I was visiting the Technological Innovation, Entrepreneurship, and
Strategic Management (TIES) Group at the MIT Sloan School of Management; I
greatly appreciate their hospitality.}

\subjclass[2020]{Primary 11H06; Secondary 11F11, 05B99}
\keywords{Type~II lattice, extremal lattice, spherical design, configuration
result}

\begin{abstract}
We show that if $L$ is an extremal Type~II lattice of rank~$72$, then
$L$ is generated by its vectors of norm~$10$. The proof determines the
full inner product distribution between the shells of norms~$8$ and~$10$
using the spherical $11$-design property of the norm-$10$ shell.
\end{abstract}

\maketitle

\section{Introduction}

A \emph{lattice} of rank~$n$ is a free $\bZ$-module of rank~$n$
equipped with a positive-definite inner product
$\langle\cdot,\cdot\rangle\colon L\times L\to\bR$. For $x\in L$, we
write $\langle x,x\rangle$ for the \emph{norm} of~$x$. A lattice is
\emph{even} if $\langle x,x\rangle\in 2\bZ$ for all $x\in L$, and
\emph{unimodular} (or \emph{self-dual}) if $L=L^*$, where $L^*$ is the
dual lattice. An even unimodular lattice is said to be of
\emph{Type~II}, and the rank of such a lattice is divisible by~$8$
(see, e.g.,~\cite{Ser73}).

Mallows, Odlyzko, and Sloane~\cite{MOS} proved that the minimum nonzero
norm $\min(L)$ of a Type~II lattice of rank~$n$ satisfies
\[
  \min(L) \le 2\lfloor n/24\rfloor+2;
\]
Type~II lattices attaining this bound are called \emph{extremal}. For
each $j\ge 0$, we write
\[
  \Lambda_j(L)=\{x\in L:\langle x,x\rangle=j\}
\]
for the shell of norm-$j$ vectors, and write $\mathcal{L}_j(L)$ for the
sublattice generated by~$\Lambda_j(L)$.

Extremal Type~II lattices exhibit remarkable geometric regularity. Their
nonempty shells are highly symmetric spherical designs~\cite{Venkov84spherical},
and this rigidity leads to strong configuration results governing the
interaction of vectors of different norms. In particular,
Venkov~\cite{Ven84} showed that an extremal Type~II lattice of rank~$32$ 
is generated by its vectors of minimal norm (see also~\cite{Oze86a}), 
and Ozeki~\cite{Oze86b} established the analogous result in rank~$48$. 
Kominers~\cite{Kom09} extended Ozeki's methods to ranks~$56$, $72$, 
and~$96$, proving in particular:

\begin{theorem}[\cite{Kom09}]\label{thm:prior}
  If $L$ is an extremal Type~II lattice of rank~$72$, then $L$ is
  generated by its vectors of minimal norm $\min(L)=8$, i.e.,
  $\mathcal{L}_8(L)=L$.
\end{theorem}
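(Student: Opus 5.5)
The plan follows Ozeki's strategy. Fix a vector $v\in L$ not in $\mathcal{L}_8(L)$ and argue by contradiction; equivalently, let $M=\mathcal{L}_8(L)$, suppose $M\neq L$, and pick a coset $v+M$ with $v$ of minimal norm in that coset. The constraint is that $v$ cannot be shortened by subtracting a minimal vector $x\in\Lambda_8(L)$. Indeed, $\langle v-x,v-x\rangle=\langle v,v\rangle-2\langle v,x\rangle+8\ge\langle v,v\rangle$ for all $x\in\Lambda_8(L)$, which forces $\langle v,x\rangle\le 4$. By symmetry $x\mapsto -x$ this gives $|\langle v,x\rangle|\le 4$ for every $x\in\Lambda_8(L)$.

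The key input is that $\Lambda_8(L)$ is a spherical $11$-design (Venkov, since $72=24\cdot 3$ and the shells of extremal lattices are $11$-designs). Hence for every polynomial $f$ of degree $\le 11$ in $t=\langle v,x\rangle$, the sum $\sum_{x\in\Lambda_8}f(\langle v,x\rangle)$ is determined explicitly by $m=\langle v,v\rangle$ and $|\Lambda_8|=6218175600$. Concretely, I would use the design identities
\[
\sum_{x\in\Lambda_8}\langle v,x\rangle^{2k}=|\Lambda_8|\,\frac{1\cdot3\cdots(2k-1)\,8^k m^k}{72\cdot74\cdots(72+2k-2)},\qquad k\le 5 .
\]
Writing $n_j=\#\{x:\langle v,x\rangle=\pm j\}$ for $j=0,\dots,4$ gives five linear equations ($k=0,\dots,4$ together with $k=5$) in five unknowns, so the $n_j$ are determined as functions of $m$. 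The contradiction comes from showing that for every even $m$ some $n_j$ is negative or nonintegral, or from reducing to small $m$ and excluding those separately.

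The main obstacle is the range of $m$. For large $m$, the $k=5$ moment grows like $m^5$ while $\sum t^{10}\le 4^{10}\cdot|\Lambda_8|/\ldots$ stays bounded, so $m$ is bounded above quickly: I expect roughly $m\le 12$ or so. The remaining small cases $m=2,4,6$ are impossible because $\min L=8$. The cases $m=8,10,12,\dots$ must each be checked. For $m=8$, $v$ is itself minimal and lies in $M$. For $m=10$ and $m=12$, I would solve the linear system and look for negative or fractional counts. If one case survives (most likely $m=10$ or $12$), the fallback is to exploit that $v-x$ for $\langle v,x\rangle=4$ also has norm $m$ and lies in the same coset, and to apply a second-order counting argument over pairs, as Ozeki did in rank $48$. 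First, though, I would try strengthening with parity: replacing $v$ by $2v$-type arguments or using the fact that $L/M$ is a finite group whose nontrivial elements all have minimal-norm representatives satisfying the same bounds.
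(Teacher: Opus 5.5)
This theorem is quoted in the paper from Kominers (2009) and is not reproved there; that argument extends Ozeki's method. Your setup matches it: take $v$ of least norm $m$ in a nontrivial coset of $\mathcal{L}_8(L)$, so $|\langle v,x\rangle|\le 4$ on $\Lambda_8$, then apply the $11$-design identities for $k\le 5$. There is a genuine gap in how you get the contradiction. You count ``five linear equations in five unknowns,'' but $k=0,1,\dots,5$ gives \emph{six} equations for $n_0,\dots,n_4$, and this overdetermination is the whole argument. Solving a square subsystem and testing signs and integrality does not work. For every even $m$ with $10\le m\le 16$, the equations $k=0,\dots,4$ have a solution in positive integers. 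At $m=10$ it is $(n_0,\dots,n_4)=(2{,}330{,}209{,}350,\ 1{,}507{,}699{,}200,\ 397{,}500{,}600,\ 37{,}670{,}400,\ 1{,}112{,}925)$. So each of your cases survives, and you are left with the unspecified fallback. Your bound on $m$ is also off. The estimate $\sum t^{10}\le 4^{10}|\Lambda_8|$ only gives $m\le 38$, and the sharper $\sigma_{10}\le 16\,\sigma_8$ gives $m\le 16$, not $12$.

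The missing step is to use the sixth equation as a consistency condition. Let
\[
f(t)=t^2(t^2-1)(t^2-4)(t^2-9)(t^2-16)=t^{10}-30t^8+273t^6-820t^4+576t^2 .
\]
This vanishes at every admissible $t\in\{0,\pm1,\dots,\pm4\}$, so $\sum_{x\in\Lambda_8}f(\langle v,x\rangle)=0$. Your moment formula reads $\sum_x\langle v,x\rangle^{2k}=|\Lambda_8|\,c_km^k$ with $(c_1,\dots,c_5)=(1/9,\,4/111,\,40/2109,\,1120/82251,\,112/9139)$. Together with $|\Lambda_8|=6{,}218{,}175{,}600$ this gives
\[
\sum_{x\in\Lambda_8}f(\langle v,x\rangle)=1{,}814{,}400\,m\,R(m),\qquad R(m)=42m^4-1400m^3+17745m^2-101270m+219336 .
\]
The second derivative $R''(m)=504m^2-8400m+35490$ has negative discriminant, so $R$ is convex. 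Since $R(10)=1136$ and $R'(10)=1630>0$, we get $R(m)>0$ for all $m\ge 10$. That is a contradiction for every admissible $m$ at once, so no bound on $m$, case analysis, pair counting or parity argument is needed. Equivalently, every $v$ of norm $m\ge 10$ has some $x\in\Lambda_8$ with $|\langle v,x\rangle|\ge 5$. Subtracting $\pm x$ then lowers the norm of $v$, and induction on the norm gives $\mathcal{L}_8(L)=L$.
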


Meanwhile, Ozeki~\cite{Oze89} observed that there exist extremal
Type~II lattices of rank~$40$ that are \textit{not} generated by their
minimal vectors alone, while at the same time showing that every
extremal Type~II lattice of rank~$40$ is generated by the shells of
norms $\min(L)$ and $\min(L)+2$. Kominers and Abel~\cite{KA08} proved
the analogous two-shell generation result in ranks~$80$ and~$120$.
Elkies and Kominers~\cite{EK09} then sharpened the conclusion in
ranks~$40$ and~$80$ by showing that in those dimensions, the shell of
norm $\min(L)+2$ alone generates the lattice.

In this paper we prove the analogous second-shell generation result in
rank~$72$:

\begin{theorem}\label{thm:main}
  If $L$ is an extremal Type~II lattice of rank~$72$, then $L$ is
  generated by its vectors of norm $\min(L)+2=10$, i.e., 
	$\mathcal{L}_{10}(L)=L$.
\end{theorem}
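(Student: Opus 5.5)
By Theorem~\ref{thm:prior}, $L=\mathcal{L}_8(L)$. It therefore suffices to show that every norm-$8$ vector $x\in\Lambda_8(L)$ lies in $\mathcal{L}_{10}(L)$. The standard configuration approach does this by finding, for a fixed $x\in\Lambda_8$, a norm-$10$ vector $y$ such that $x-y$ is again in $\Lambda_{10}$. Expanding,
\[
\langle x-y,x-y\rangle=8+10-2\langle x,y\rangle,
\]
so $x-y$ has norm $10$ exactly when $\langle x,y\rangle=4$. Then $x=y+(x-y)$ is a sum of two norm-$10$ vectors. So the whole proof reduces to showing that
\[
N_4(x):=\#\{y\in\Lambda_{10}:\langle x,y\rangle=4\}
\]
is positive for every $x\in\Lambda_8$.

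To compute $N_4(x)$, I would determine the full distribution $N_k(x)=\#\{y\in\Lambda_{10}:\langle x,y\rangle=k\}$.

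First, restrict the range of $k$. By Cauchy--Schwarz, $|k|\le\sqrt{80}<9$, so $|k|\le 8$. Sharper constraints come from extremality, since every nonzero vector has norm at least $8$:
\begin{itemize}
\item $\langle x-y,x-y\rangle=18-2k$ is either $0$ or at least $8$. This forces $k\le 5$, because $k=6,7,8$ would give norms $6,4,2$, and $x\neq y$ since their norms differ.
\item By the symmetry $y\mapsto -y$, we have $N_k=N_{-k}$.
\end{itemize}
So the unknowns are $N_0,\dots,N_5$, six values in total.

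Next, write down the linear equations. The shell $\Lambda_{10}$ is a spherical $11$-design; this follows from Venkov's theorem, since for rank $72$ the harmonic theta series vanish in low degree. It gives the identities
\[
\sum_{y\in\Lambda_{10}}\langle x,y\rangle^{2j}=|\Lambda_{10}|\,\frac{(1\cdot 3\cdots(2j-1))\,(80)^{j}}{72\cdot 74\cdots(72+2j-2)}
\]
for $j=0,\dots,5$. The cardinality $|\Lambda_{10}|$ is read off from the extremal theta series. These are six linear equations in $N_0,\dots,N_5$. The coefficient matrix is Vandermonde in $k^2$ for $k^2\in\{0,1,4,9,16,25\}$, so it is invertible and the system determines $N_4$ uniquely.

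The final step is to solve this system and check that $N_4>0$; I expect it to be a large positive integer. This explicit computation is the main obstacle. It requires the exact coefficient of $q^{5}$ in the unique extremal weight-$36$ modular form, which gives $|\Lambda_{10}|=6218175600$, and a careful rational linear solve. If $N_4$ somehow came out as zero, I would fall back on the other routes to membership in $\mathcal{L}_{10}$:
\begin{itemize}
\item using $N_5>0$: then $x-y$ has norm $8$, which does not directly help;
\item writing $2x$ or $x$ as combinations of more norm-$10$ vectors;
\item combining $N_3$, where $x-y$ has norm $12$, with a further step.
\end{itemize}
I expect the direct $N_4>0$ argument to suffice. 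Integrality and nonnegativity of all the $N_k$ also serve as a consistency check on the computation.
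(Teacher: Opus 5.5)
Your strategy is the paper's proof step for step. You reduce via Theorem~\ref{thm:prior} to showing $\Lambda_8\subseteq\mathcal{L}_{10}(L)$, bound $|\langle x,y\rangle|\le 5$ by extremality, and determine every multiplicity from the $11$-design moments of degrees $2,4,6,8,10$ plus the total count, via an invertible Vandermonde system in $k^2$. What is missing is the step that carries all the content: you never solve the system. You only predict $N_4>0$ and list fallbacks. The paper does this solve (Proposition~\ref{prop:dist}) and gets $M_4=2{,}595{,}532{,}800>0$, with all six multiplicities positive integers. Until you do the same, your argument is a correct plan rather than a proof.

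You also misquote the input. The number $6{,}218{,}175{,}600=a(4)$ is $|\Lambda_8|$, whereas $|\Lambda_{10}|=a(5)=15{,}281{,}788{,}354{,}560$. This does not affect the sign of $N_4$. Every right-hand side is $|\Lambda_{10}|$ times a constant depending only on $n=72$ and the norms $8,10$, so the solution vector is proportional to $|\Lambda_{10}|$; for positivity only $\Lambda_{10}\neq\varnothing$ matters. However, every multiplicity would be off by the factor $|\Lambda_{10}|/|\Lambda_8|=12288/5$. Your integrality check would not catch this, because the rescaled values happen to be integers as well: for example, $N_5$ would come out as $5680$ instead of $13{,}959{,}168$.
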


The existence of an extremal Type~II lattice of rank~$72$ was
established by Nebe~\cite{Nebe}, who constructed such a lattice as a
Hermitian tensor product of the Barnes lattice and the Leech lattice
over the ring of integers in~$\bQ(\sqrt{-7})$.

The key point in our argument is that, in rank~$72$, the relevant
cross-shell configuration is determined outright by the spherical design
identities. Fix a minimal vector $x_0\in\Lambda_8(L)$, and for each
integer $j$ consider the number of vectors $y\in\Lambda_{10}(L)$ with
$\langle x_0,y\rangle=j$. In ranks~$40$ and~$80$, the available
relations coming from spherical design theory and weighted theta series
do not determine these multiplicities \emph{a priori}; Elkies and
Kominers~\cite{EK09} therefore proceeded by contradiction, forcing one
critical multiplicity to vanish and then showing that the resulting
projective solution includes a negative entry. In rank~$72$, by
contrast, the norm-$10$ shell is a spherical $11$-design, and the
resulting even moments of degrees $2$, $4$, $6$, $8$, and~$10$ give
enough equations to determine all possible nonzero inner-product
multiplicities uniquely.

In fact, the proof yields more than Theorem~\ref{thm:main} alone: it
determines the full inner product distribution between the shells
$\Lambda_8(L)$ and $\Lambda_{10}(L)$. In particular, we find that for
every $x_0\in\Lambda_8(L)$ there exists $y\in\Lambda_{10}(L)$ with
\[
  \langle x_0,y\rangle=4.
\]
It follows that $y-x_0\in\Lambda_{10}(L)$, so every minimal vector of
$L$ is the difference of two vectors of norm~$10$. Since
$\mathcal{L}_8(L)=L$ by Theorem~\ref{thm:prior}, it follows that
$\mathcal{L}_{10}(L)=L$.

\section{Proof of Theorem~\ref{thm:main}}\label{sec:proof}

Let $L$ be an extremal Type~II lattice of rank~$72$, so that
$\min(L)=8$. We write $a(m)$ for the number of vectors of norm~$2m$ in
$L$; equivalently, $a(m)$ is the $m$-th Fourier coefficient of the
\emph{theta series}
\[
  \Theta_L(z)=\sum_{x\in L}q^{\langle x,x\rangle/2}
  \qquad (q=e^{2\pi iz}).
\]

\subsection*{Theta series and shell sizes}

Because $L$ is Type~II, its theta series $\Theta_L(z)$ is a modular
form of weight~$36$ for $\mathrm{SL}_2(\bZ)$. The space
$M_{36}(\mathrm{SL}_2(\bZ))$ of such modular forms has basis
\[
  E_4^9,\qquad
  \Delta E_4^6,\qquad
  \Delta^2 E_4^3,\qquad
  \Delta^3,
\]
where $E_4$ is the normalized Eisenstein series of weight~$4$ and
$\Delta$ is the discriminant cusp form. Together with the constant
term~$1$, the extremality conditions $a(1)=a(2)=a(3)=0$ determine
$\Theta_L$ uniquely~\cite{SPLAG}:
\[
  \Theta_L
  =E_4^9-2160\,\Delta E_4^6
   +965520\,\Delta^2 E_4^3
   -27302400\,\Delta^3.
\]
Expanding at the cusp gives
\begin{equation}\label{eq:shells}
  \Theta_L
  =1+6{,}218{,}175{,}600\,q^4
   +15{,}281{,}788{,}354{,}560\,q^5
   +O(q^6).
\end{equation}
Hence, we have
\[
  |\Lambda_8|=a(4)=6{,}218{,}175{,}600,
  \qquad
  |\Lambda_{10}|=a(5)=15{,}281{,}788{,}354{,}560;
\]
in particular, $\Lambda_{10}\neq\varnothing$.\footnote{Here and hereafter,
since $L$ is fixed throughout, we abuse notation slightly by writing
``$\Lambda_8$'' for $\Lambda_8(L)$ and ``$\Lambda_{10}$'' for
$\Lambda_{10}(L)$ when doing so will not introduce confusion.}

\subsection*{Cross-shell inner product constraints}

Fix $x_0\in\Lambda_8(L)$. Following~\cite{EK09,KA08}, define
\[
  M_j(L;x_0)
  =
  \bigl|\{y\in\Lambda_{10}(L):\langle x_0,y\rangle=j\}\bigr|
  \qquad (j\in\bZ).
\]
By the symmetry $y\mapsto -y$, we have
$M_{-j}(L;x_0)=M_j(L;x_0)$ for all~$j$.

Let $y\in\Lambda_{10}(L)$ and put $j=\langle x_0,y\rangle$. Choose
$\varepsilon\in\{\pm1\}$ so that $\varepsilon j=|j|$. Then
\[
  \langle y-\varepsilon x_0,\,y-\varepsilon x_0\rangle
  =10-2|j|+8
  =18-2|j|.
\]
Since $y-\varepsilon x_0\neq 0$ otherwise $y=\varepsilon x_0$, which is
impossible because their norms are different, extremality implies that
$18-2|j|\ge 8$. Hence $|j|\le 5$. Therefore the possible inner products
are
\[
  j\in\{0,\pm1,\pm2,\pm3,\pm4,\pm5\}.
\]

\subsection*{Moment equations from the $11$-design property}

Venkov~\cite{Venkov84spherical} proved that every nonempty shell of an
extremal Type~II lattice of rank~$n$ is a spherical $t$-design, in fact
a spherical $(t\frac{1}{2})$-design, where $t=11,7,3$ according as
$n\equiv 0,8,16\pmod{24}$. Thus, in particular, every nonempty shell of
an extremal Type~II lattice of rank~$72$ is a spherical $11$-design.

We apply the $11$-design property of the shell
$\Lambda_{10}/\sqrt{10}\subset S^{71}$, where $S^{71}$ denotes the
$71$-sphere. For a spherical $t$-design $Z\subset S^{n-1}$ and any unit
vector $\hat e\in S^{n-1}$, the even moment formula~\cite{DGS77} gives
\begin{equation}\label{eq:designformula}
  \frac{1}{|Z|}\sum_{z\in Z}(\hat e\cdot z)^{2k}
  =
  \frac{(2k-1)!!}{n(n+2)\cdots(n+2k-2)}
\end{equation}
for $1\le 2k\le t$. Taking $\hat e=x_0/\sqrt{8}$ and
$Z=\Lambda_{10}/\sqrt{10}$ with $n=72$, we have
$\hat e\cdot z=\langle x_0,y\rangle/\sqrt{80}$, and thus obtain
\begin{equation}\label{eq:moments}
  \sigma_{2k}
  :=
  \sum_{y\in\Lambda_{10}}\langle x_0,y\rangle^{2k}
  =
  |\Lambda_{10}|\cdot 80^k\cdot
  \frac{(2k-1)!!}{\displaystyle\prod_{i=0}^{k-1}(72+2i)}
\end{equation}
for $k=1,2,3,4,5$.

The right-hand side of~\eqref{eq:moments} is independent of the
particular choice of $x_0\in\Lambda_8(L)$; hence the moments
$\sigma_{2k}$ are the same for every such $x_0$. Substituting in the
value of $|\Lambda_{10}|$ from~\eqref{eq:shells}, we find
\begin{equation}\label{eq:momentvalues}
\begin{aligned}
  \sigma_2    &= 16{,}979{,}764{,}838{,}400,\\
  \sigma_4    &= 55{,}069{,}507{,}584{,}000,\\
  \sigma_6    &= 289{,}839{,}513{,}600{,}000,\\
  \sigma_8    &= 2{,}080{,}899{,}072{,}000{,}000,\\
  \sigma_{10} &= 18{,}728{,}091{,}648{,}000{,}000.
\end{aligned}
\end{equation}

\subsection*{The linear system}

We write $M_j=M_j(L;x_0)$ for brevity. Using the fact that
$M_{-j}=M_j$, the moments~\eqref{eq:moments} decompose as
\[
  \sigma_{2k}=2\sum_{j=1}^{5} M_j j^{2k}
  \qquad (k=1,\dots,5).
\]
Equivalently, using~\eqref{eq:momentvalues}, we have
\[
  \begin{pmatrix}
    1 & 4 & 9 & 16 & 25 \\
    1 & 16 & 81 & 256 & 625 \\
    1 & 64 & 729 & 4096 & 15625 \\
    1 & 256 & 6561 & 65536 & 390625 \\
    1 & 1024 & 59049 & 1048576 & 9765625
  \end{pmatrix}
  \begin{pmatrix}
    M_1 \\ M_2 \\ M_3 \\ M_4 \\ M_5
  \end{pmatrix}
  =
  \begin{pmatrix}
    8{,}489{,}882{,}419{,}200 \\
    27{,}534{,}753{,}792{,}000 \\
    144{,}919{,}756{,}800{,}000 \\
    1{,}040{,}449{,}536{,}000{,}000 \\
    9{,}364{,}045{,}824{,}000{,}000
  \end{pmatrix}.
\]
The coefficient matrix is invertible: after factoring $1,4,9,16,25$
from the five columns, we obtain the standard Vandermonde matrix
associated to the values $\{1,4,9,16,25\}$. The remaining multiplicity
is determined by the total count
\begin{equation}\label{eq:total}
  M_0
  =
  |\Lambda_{10}|-2\sum_{j=1}^{5}M_j.
\end{equation}

Since the coefficient matrix is invertible, the moment identities
determine a unique solution $(M_1,\dots,M_5)$; because the moments
themselves are independent of $x_0$, so are $M_1,\dots,M_5$. Then
$M_0$ is also independent of $x_0$ by~\eqref{eq:total}. We record the
resulting distribution of inner products in the following proposition:

\begin{proposition}\label{prop:dist}
  Let $L$ be an extremal Type~II lattice of rank~$72$, and let
  $x_0\in\Lambda_8(L)$. The multiplicities $M_j=M_j(L;x_0)$ are
  independent of the choice of~$x_0$ and are given by
  \[
    \begin{aligned}
      M_0 &= 5{,}723{,}204{,}788{,}224, \\
      M_1 &= 3{,}708{,}252{,}979{,}200, \\
      M_2 &= 975{,}222{,}374{,}400, \\
      M_3 &= 93{,}206{,}937{,}600, \\
      M_4 &= 2{,}595{,}532{,}800, \\
      M_5 &= 13{,}959{,}168.
    \end{aligned}
  \]
  In particular, all six multiplicities are positive integers, and 
	$M_0+2(M_1+M_2+M_3+M_4+M_5)=|\Lambda_{10}|$ as expected.
\end{proposition}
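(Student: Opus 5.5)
Independence from $x_0$ was established just before the statement, since the moments \eqref{eq:moments} do not depend on $x_0$ and the coefficient matrix is invertible. What remains is to solve the $5\times5$ system explicitly and check the result. Rather than inverting the Vandermonde-type matrix directly, I will use Lagrange interpolation on $u=j^2\in\{1,4,9,16,25\}$. For each $m\in\{1,\dots,5\}$ let
\[
P_m(u)=\prod_{i\ne m}\frac{u-i^2}{m^2-i^2}.
\]
Then $u\,P_m(u)/m^2$ is a polynomial of degree $5$ with zero constant term that equals $\delta_{jm}$ at $u=j^2$ for $j=1,\dots,5$. Writing $u\,P_m(u)/m^2=\sum_{k=1}^5 c_{m,k}u^k$, we get
\[
M_m=\tfrac12\sum_{k=1}^{5}c_{m,k}\,\sigma_{2k}.
\]
This is a finite rational computation with the values in \eqref{eq:momentvalues}. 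I would carry it out for each $m$ and confirm that it reproduces the stated values $M_1,\dots,M_5$.

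Next I will obtain $M_0$ from \eqref{eq:total}, using $|\Lambda_{10}|$ from \eqref{eq:shells}. I compute $2(M_1+\cdots+M_5)=2\cdot 4{,}779{,}291{,}783{,}168=9{,}558{,}583{,}566{,}336$. Subtracting this from $15{,}281{,}788{,}354{,}560$ gives $5{,}723{,}204{,}788{,}224$, as claimed. The total-count identity then holds by construction.

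As a safeguard against arithmetic slips, I will substitute the claimed $M_1,\dots,M_5$ back into $\sigma_{2k}=2\sum_j M_j j^{2k}$ for all five $k$. Since the matrix is invertible, this check alone proves the values; for example, the $k=1$ row should return $8{,}489{,}882{,}419{,}200$. Positivity and integrality are then visible from the listed numbers.

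The main obstacle is purely computational: the moment values are large, and interpolation introduces denominators such as $\prod_{i\ne m}(m^2-i^2)$. It must be checked that these cancel exactly, and this exact cancellation is the substantive consistency check on the theta series and the design moments. I would carry out the verification in exact integer arithmetic, most naturally by the back-substitution check above. I would also double-check $\sigma_{2k}$ from \eqref{eq:moments}; for instance, $\sigma_2=|\Lambda_{10}|\cdot 80/72$ should be an integer.
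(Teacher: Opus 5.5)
Your proposal is correct and follows the paper's route: the five even-moment equations form an invertible Vandermonde-type system in $M_1,\dots,M_5$, and $M_0$ then comes from the total count~\eqref{eq:total}. Checking the claimed values against all five rows is a complete proof by invertibility, and they do satisfy every row; your intermediate figure $2(M_1+\cdots+M_5)=9{,}558{,}583{,}566{,}336$ is also correct.
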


Noting in particular from Proposition~\ref{prop:dist} that $M_4>0$, we
can then obtain Theorem~\ref{thm:main} using the well-known identity
\begin{equation}\label{eq:parallelogram}
  \langle x+x',x+x'\rangle
  =
  \langle x,x\rangle+2\langle x,x'\rangle+\langle x',x'\rangle.
\end{equation}

\begin{proof}[Proof of Theorem~\ref{thm:main}]
We fix $x_0\in\Lambda_8(L)$. By Proposition~\ref{prop:dist}, we have
$M_4=M_4(L;x_0)>0$, which means that there exists
$y\in\Lambda_{10}(L)$ with $\langle x_0,y\rangle=4$. Then, using
\eqref{eq:parallelogram}, we obtain
\[
  \langle y-x_0,\,y-x_0\rangle
  =
  10-2\cdot 4+8
  =
  10,
\]
so $y-x_0\in\Lambda_{10}(L)$. It follows that
\[
  x_0=y-(y-x_0)\in\mathcal{L}_{10}(L).
\]
Hence, we see that every minimal vector $x_0\in\Lambda_8(L)$ is the 
difference of two norm-$10$ vectors in $L$, and so
\[
  \Lambda_8(L)\subseteq \mathcal{L}_{10}(L).
\]
Therefore, we have
\begin{equation}\label{eq:8in10}
  \mathcal{L}_8(L)\subseteq \mathcal{L}_{10}(L).
\end{equation}

Finally, we know from Theorem~\ref{thm:prior} that
$\mathcal{L}_8(L)=L$; the claimed result thus follows
from~\eqref{eq:8in10}.
\end{proof}

\section{Remarks}

\subsection*{Why the inner product value $\min(L)/2$ is critical}

Suppose more generally that $L$ is any Type~II lattice with minimal norm
$m_0=\min(L)$, and let $x_0\in\Lambda_{m_0}(L)$ and
$y\in\Lambda_{m_0+2}(L)$. If
$\langle x_0,y\rangle=\varepsilon m_0/2$ for some
$\varepsilon\in\{\pm1\}$, then the identity~\eqref{eq:parallelogram}
yields
\[
  \langle y-\varepsilon x_0,\,y-\varepsilon x_0\rangle
  =
  (m_0+2)-2(m_0/2)+m_0
  =
  m_0+2.
\]
Thus, if the multiplicity corresponding to the inner product value
$m_0/2$ is positive, then $x_0\in\mathcal{L}_{m_0+2}(L)$.

The mechanism just described is present in all ranks; what varies is
which moment relations are available. In ranks~$40$ and~$80$, Elkies and
Kominers~\cite{EK09} used weighted-theta relations in degrees $\{2,6\}$
and $\{2,4,6,10\}$, respectively, together with the total-count
equation---but those relations are not quite enough to determine all
multiplicities outright. Thus, Elkies and Kominers instead argued by
contradiction: assuming $x_0\notin\mathcal{L}_{m_0+2}$ forces
$M_{m_0/2}=0$, and the remaining multiplicities, including~$M_0$,
satisfy a homogeneous linear system whose solution space is
one-dimensional. The resulting projective solution has a negative entry,
which is impossible. In rank~$72$, by contrast, the five even-degree
moments $\{2,4,6,8,10\}$ exactly match the five unknowns
$M_1,\dots,M_5$, and then $M_0$ follows from the total count.

\subsection*{Geometric consequences}

The positivity of all six multiplicities in Proposition~\ref{prop:dist}
implies that for every minimal vector $x_0$ in an extremal Type~II
lattice of rank~$72$, there exist vectors $y$ of norm~$10$ achieving
each inner product value
\[
  \langle x_0,y\rangle\in\{0,\pm1,\pm2,\pm3,\pm4,\pm5\}.
\]
In particular, $M_5>0$ means that there exist pairs $(x_0,y)$ with
$\langle x_0,y\rangle=\pm5$, for which
\[
  \langle y\mp x_0,\,y\mp x_0\rangle=8.
\]
Thus $y\mp x_0$ is again a minimal vector, giving structural information
about the interaction between the shells $\Lambda_8$ and $\Lambda_{10}$
that may be useful in the study of extremal lattices in this dimension.

\subsection*{Open questions}

It is natural to ask whether an extremal Type~II lattice $L$ of rank~$72$
is also generated by shells farther from the minimum---for instance, by
$\Lambda_{12}(L)$ alone. If one fixes $x_0\in\Lambda_8(L)$ and counts
vectors $y\in\Lambda_{12}(L)$ by the value of $\langle x_0,y\rangle$,
then the admissible inner products are
\[
  0,\pm1,\pm2,\pm3,\pm4,\pm5,\pm6.
\]
Thus there are seven multiplicities, but only six basic equations coming
from the total count and the even moments of degrees $2$, $4$, $6$, $8$,
and~$10$. An additional relation---for example from weighted theta
series---would therefore be needed to determine the full distribution by
this method, and may be needed to force the critical multiplicity
directly. We leave the systematic exploration of higher-shell generation
for future work.

\providecommand{\bysame}{\leavevmode\hbox to3em{\hrulefill}\thinspace}
\providecommand{\MR}{\relax\ifhmode\unskip\space\fi MR }
\providecommand{\MRhref}[2]{%
  \href{http://www.ams.org/mathscinet-getitem?mr=#1}{#2}
}
\providecommand{\href}[2]{#2}

\end{document}